\numberwithin{equation}{section}
\newtheorem{lem}{\quad\textbf{\Large Lemma}}[section]
\newtheorem{thm}[lem]{\quad\textbf{\Large Theorem}}
\newtheorem{cor}[lem]{\quad\textbf{\Large Corollary}}
\newtheorem{prop}[lem]{\quad\textbf{\Large Propsotion}}
\def\squarebox#1{\hbox to #1{\hfill\vbox to #1{\vfill}}}
\begin{document}
\title[Generalizations of a terminating summation formula]
{Generalizations of a terminating summation formula of basic
hypergeometric series  and their applications}
\author{Jun-Ming Zhu  }
\address{Department of Mathematics, Luoyang Normal University,
Luoyang City, Henan Province 471022, China}
\email{junming\_zhu@163.com}
%\author{Qiu-Ming Luo}
%\address{Department of Mathematics, Chongqing Normal University,
%Chongqing City 401331, China} \email{luomath2007@163.com}
\thanks{Keywords: basic hypergeometric series; analytic function; analytic continuation;
 $q$-analogue; hypergeometric series; Jacobi's triple product identity; quintuple
product identity; $q$-series.
\\\indent  MSC (2010):   33D15, 11B65, 33C20, 11F27.
 \\\indent This research is partly supported by the
 National Science Foundation of China
 (Grant No. 11371184).
 }

\begin{abstract}
We generalize a terminating summation formula to a unilateral
nonterminating, and further, a bilateral  summation formula by a
property of analytic functions. The unilateral one is proved to be a
$q$-analogue of a $_4F_3$-summation formula. And, an identity
unifying Jacobi's triple product identity and the quintuple product
identity is obtained as a special case of the bilateral one.
% As its application, an identity on $q$-gamma function is obtained.
\end{abstract}
 \maketitle
%%%%%%%%%%%%%%%%%%%%%%%%%%%%%%%%%%%%%%%%%%%%%%%%%%%%%%%%%%%%%%%%%%%%%%%%%%%%%

\section{Introduction}
In this papar, we  suppose $0<|q|<1$ and follow the notations and
terminology in \cite{gasper}. The $q$-shifted factorials are defined
respectively by
\begin{equation*}\label{def}
(a;q)_\infty=\prod_{i=0}^{\infty}(1-aq^{i}) \qquad \mbox{and}\qquad
(a;q)_n=\frac{(a;q)_\infty}{(aq^n;q)_\infty}
\end{equation*}
for any integer $n$. Let
$$(a,b,\cdots,c;
q)_k=(a;q)_k(b;q)_k\cdots(c; q)_k, $$
 where $k$ is any integer or
$\infty$.
 The $(m+1)$-basic hypergeometric series $\Phi$
(see \cite[p. 95, Eq. (3.9.1) and (3.9.2)]{gasper}) and $\Psi$ are
defined respectively by
\begin{eqnarray*}\label{mphi}
\Phi \left(
\begin{array}{c}
  a_1,\cdots,a_{r}: c_{1,1},\cdots,c_{1,r_1}:\cdots: c_{m,1},\cdots,c_{m,r_m}\\ b_1,\cdots,b_{r-1}:d_{1,1},\cdots,d_{1,r_1}:\cdots: d_{m,1},\cdots,d_{m,r_m}
\end{array}
;q,q_1,\cdots,q_m;z
\right)\notag\\
=\sum\limits_{n=0}^{\infty}
\frac{(a_1,\cdots,a_{r};q)_n}{(q,b_1,\cdots,b_{r-1};q)_n}z^n\prod\limits_{j=1}^{m}\frac{(c_{j,1},\cdots,c_{j,r_j};q_j)_n}{(d_{j,1},\cdots,d_{j,r_j};q_j)_n}
\notag
\end{eqnarray*}
and
\begin{eqnarray*}\label{mpsi}
\Psi \left(
\begin{array}{c}
  a_1,\cdots,a_{r}: c_{1,1},\cdots,c_{1,r_1}:\cdots: c_{m,1},\cdots,c_{m,r_m}\\ b_1,\cdots,b_{r}:d_{1,1},\cdots,d_{1,r_1}:\cdots: d_{m,1},\cdots,d_{m,r_m}
\end{array}
;q,q_1,\cdots,q_m;z
\right)\notag\\
=\sum\limits_{n=-\infty}^{\infty}
\frac{(a_1,\cdots,a_{r};q)_n}{(b_1,\cdots,b_{r};q)_n}z^n\prod\limits_{j=1}^{m}\frac{(c_{j,1},\cdots,c_{j,r_j};q_j)_n}{(d_{j,1},\cdots,d_{j,r_j};q_j)_n}.
\notag
\end{eqnarray*}
%Setting $m=0$, then
%\eqref{mphi} and \eqref{mpsi} reduce to
% the basic hypergeometric series ${}_{r}\phi_s$
%\cite[Eq. (1.2.22)]{gasper} and ${}_{r}\psi_s$ \cite[Eq.
%(5.1.1)]{gasper}, which are,   respectively,
%\begin{equation}{}_{r}\phi_{r-1} \left(
%\begin{array}{cccc}
%  a_1,&a_2,&\cdots,&a_{r}\\ &b_1,&\cdots,&b_s
%\end{array}
%;q,z \right)=\sum_{n=0}^{\infty}
%\frac{(a_1,a_2,\cdots,a_{r};q)_n}{(q,b_1,b_2,\cdots,b_{r-1};q)_n}z^n\end{equation}
% and
% \begin{equation}{}_r\psi_r \left(
%\begin{array}{c}
%  a_1,a_2,\cdots,a_r\\ b_1,b_2,\cdots,b_r
%\end{array}
%;q,z \right)=\sum_{n=-\infty}^{\infty}
%\frac{(a_1,a_2,\cdots,a_r;q)_n}{(b_1,b_2,\cdots,b_r;q)_n}z^n.
%\end{equation}

The main results of this paper are Theorem \ref{1} and \ref{2}
below.

\begin{thm}\label{1} For $\left|{1\over st}\right|<1$, there holds
\begin{eqnarray*}\label{2phi}
\Phi \left(
\begin{array}{cccccc}
  a^2,&aq^2,&-aq^2&:&s,&t\\ &a,&-a&:&aq/s,&aq/t
\end{array}
;q^2,q;-{1\over st} \right)\\
=\frac{(s+t)}{st}\frac{(aq,-q/s,-q/t,aq/st;q)_\infty}{(-q,aq/s,aq/t,-1/st;q)_\infty}\notag.\notag
\end{eqnarray*}
\end{thm}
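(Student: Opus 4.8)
The plan is to deduce this nonterminating identity from a known *terminating* summation by an analytic-continuation argument, which is the strategy the abstract advertises. First I would locate the terminating base case: specialize one of the numerator parameters, say $s=q^{-2N}$ for a nonnegative integer $N$, so that the ${}_4\phi_3$-type series on the left becomes a finite sum. In that case the left-hand side is a terminating very-well-poised basic hypergeometric series in base $q^2$ with a coupled base-$q$ factor, and the claimed right-hand side collapses (using $(q^{-2N};q)_\infty$-type cancellations and the standard reversal $(a;q)_{N-k}=\dots$) to a closed product. I would verify the terminating case either by quoting a known ${}_6\phi_5$ or ${}_8\phi_7$ evaluation after splitting the base-$q^2$ and base-$q$ pieces, or by a direct induction on $N$ via the $q$-Pfaff–Saalschütz / telescoping mechanism. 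This terminating identity is presumably the "terminating summation formula" referred to in the title, so I would expect it to be available essentially off the shelf.

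Next comes the analytic-continuation step. Fix $a,t,q$ and regard both sides as functions of $u:=1/s$. The key observation is that the series $\Phi(\dots;-1/(st))$ converges for $|1/(st)|<1$, i.e. for $u$ in a punctured disc, and defines there an analytic function of $u$; likewise the right-hand product is analytic in $u$ on that disc (the factors $(-q/s;q)_\infty=(-uq;q)_\infty$ etc. are entire in $u$, and the denominator $(aq/s;q)_\infty=(auq;q)_\infty$ has only isolated zeros, which one checks are cancelled by zeros of the numerator or lie outside the region of interest). The terminating evaluation shows the two analytic functions agree at the sequence of points $u = q^{2N}$, $N=0,1,2,\dots$, which accumulates at $u=0$, an interior point of the domain of analyticity. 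By the identity theorem for analytic functions, the two sides coincide on the whole disc, which (translating back) is exactly the asserted region $|1/(st)|<1$. One must be slightly careful that the left-hand side really is analytic and not merely formally defined: I would bound the general term of the series uniformly on compact subsets of $\{|u|<|1/t|\}$ (the factor $(s,t;q)_n/(aq/s,aq/t;q)_n$ grows at most geometrically once $|1/(st)|<1$), so that term-by-term the series is a locally uniform limit of polynomials in $u$ and hence analytic; this is the technical heart of the argument.

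The main obstacle I anticipate is not the continuation principle itself but the bookkeeping in the terminating case: one must correctly match the very-well-poised structure in base $q^2$ (the parameters $a^2, aq^2, -aq^2$ over $a,-a$) together with the auxiliary base-$q$ parameters $s,t$ over $aq/s, aq/t$ against a standard summable form, and then simplify the resulting product of $q$-shifted factorials — some in base $q$, some in base $q^2$ — down to the clean base-$q$ product on the right, including the prefactor $(s+t)/(st)$, which should emerge from combining $(1-s q^{2N})$- and $(1-tq^{?})$-type boundary terms. A secondary subtlety is ensuring the chosen specialization $s=q^{-2N}$ stays inside (or on the closure of, with a limiting argument) the convergence region and does not hit a pole of the right-hand side; if $s=q^{-2N}$ is awkward one can instead specialize $t$, or take $a$ to a root of unity, whichever makes the series terminate most transparently. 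Once the terminating identity and the analyticity bounds are in hand, the conclusion is immediate.
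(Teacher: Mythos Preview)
Your proposal is correct and follows essentially the same route as the paper: start from a known terminating summation and extend to the nonterminating one by the identity theorem for analytic functions, with the specialization points accumulating at an interior point of the domain. The paper quotes the terminating input as Gasper--Rahman Eq.~(3.10.5) and analytically continues in $z=q^n$ (which, after a final change of variables $a\mapsto -a$, $w\mapsto aq/s$, $z\mapsto -u/(aq)$, $u\mapsto aq/t$, is your $t$-parameter); two small corrections to your write-up are that since $(s;q)_k$ is a base-$q$ factor the natural terminating specialization is $s=q^{-n}$ rather than $q^{-2N}$, and analyticity near the accumulation point is made clean not by an ad hoc uniform bound but by rewriting $(q^{-n};q)_k\cdot(\text{argument})^k$ so that the $z$-dependence becomes polynomial in the numerator.
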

\begin{thm}\label{2} For $\left|{a^2\over bst}\right|<1$, there holds
\begin{eqnarray*}\label{2psi}
\lefteqn{{}\Psi \left(
\begin{array}{cccccc}
  aq^2,&-aq^2,&b&:&s,&t\\ a,&-a,&a^2q^2/b&:&aq/s,&aq/t
\end{array}
;q^2,q;-{a^2\over bst} \right)}\hspace{-0.2cm}
\\&=&\frac{a(s+t)}{(a+1)st}\frac{(q,q/a,aq,aq/st,-a/b;q)_\infty(a^2q^{2}/bs^2,a^2q^{2}/bt^2;q^2)_\infty}{
(q/s,q/t,aq/s,aq/t,-a^2/bst;q)_\infty(a^2q^2/b,q^{2}/b;q^2)_{\infty}}.\notag
\end{eqnarray*}
\end{thm}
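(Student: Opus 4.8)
\smallskip
\noindent\textbf{Proof strategy.} The plan is to obtain Theorem \ref{2} from Theorem \ref{1} by analytic continuation in the free parameter $b$. Write $F(b)$ and $G(b)$ for the left- and right-hand sides of Theorem \ref{2}, and put $w=1/b$. The first task is to show that, as functions of $w$, both are analytic on some disk $|w|<\rho$ about the origin. For $G$ this is immediate, since it is a ratio of convergent infinite products whose denominator factors $(a^2q^2/b;q^2)_\infty$, $(q^2/b;q^2)_\infty$, $(-a^2/bst;q)_\infty$ are nonvanishing for $b$ near $\infty$. For $F$, use the very-well-poised collapse $(aq^2,-aq^2;q^2)_n/(a,-a;q^2)_n=(1-a^2q^{4n})/(1-a^2)$ to rewrite $F(b)=\frac{1}{1-a^2}\sum_{n=-\infty}^{\infty}(1-a^2q^{4n})\frac{(b;q^2)_n}{(a^2q^2/b;q^2)_n}(-a^2/bst)^n\frac{(s,t;q)_n}{(aq/s,aq/t;q)_n}$, and estimate the general term: as $n\to+\infty$ it is asymptotic to a constant times $(a^2/bst)^n$, and by the standard asymptotics of $q$-shifted factorials at negative index the same holds as $n\to-\infty$. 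Hence the bilateral series converges uniformly on compact subsets of $\{|b|>|a^2/st|\}$, so $F$ is analytic there; moreover for $|w|$ small the term bounds sharpen to super-geometric decay, so $F(1/w)$ is bounded near $w=0$ and extends analytically across it.

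\smallskip
\noindent\textbf{The key step.} Evaluate $F$ and $G$ at $b=a^2q^{-2k}$, $k=0,1,2,\dots$, i.e.\ at $w=q^{2k}/a^2\to 0$. For such $b$ one has $a^2q^2/b=q^{2k+2}$, so $1/(q^{2k+2};q^2)_n=0$ whenever $n\le-k-1$, and the bilateral series for $F(a^2q^{-2k})$ truncates from below at $n=-k$. Re-indexing $n=m-k$ and using $(x;q)_{m-k}=(x;q)_{-k}(xq^{-k};q)_m$, together with its base-$q^2$ analogue, to extract the factors independent of $m$, the surviving sum over $m\ge0$ becomes --- up to an explicit finite prefactor built from $q$-shifted factorials --- precisely the $\Phi$-series of Theorem \ref{1} with $(a,s,t)$ replaced by $(aq^{-2k},sq^{-k},tq^{-k})$. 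Summing it by Theorem \ref{1} and simplifying yields $F(a^2q^{-2k})=G(a^2q^{-2k})$. The case $k=0$ (where $b=a^2$) is the transparent model: there $1/(q^2;q^2)_n=0$ for $n\le-1$, so $F(a^2)$ is literally the $\Phi$-series of Theorem \ref{1}, while $G(a^2)$ reduces to its right-hand side after the splittings $(q^2/s^2;q^2)_\infty=(q/s,-q/s;q)_\infty$, $(q^2/t^2;q^2)_\infty=(q/t,-q/t;q)_\infty$, $(q^2/a^2;q^2)_\infty=(q/a,-q/a;q)_\infty$, $(q^2;q^2)_\infty=(q,-q;q)_\infty$ and $(-1/a;q)_\infty=(1+1/a)(-q/a;q)_\infty$.

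\smallskip
\noindent\textbf{Conclusion and the main obstacle.} Since $F-G$ is analytic on $|w|<\rho$ and vanishes on the sequence $w=q^{2k}/a^2$ accumulating at $w=0$, the identity theorem gives $F\equiv G$ there, and then on the whole connected region $|a^2/bst|<1$ where both sides are defined; this is Theorem \ref{2}. I expect the real work to be the bookkeeping in the key step for general $k$: putting the re-indexed truncated series into the exact form of Theorem \ref{1}, and then matching the shifted infinite products that Theorem \ref{1} produces against the right-hand side of Theorem \ref{2} at $b=a^2q^{-2k}$. These are elementary but lengthy manipulations of $q$-shifted factorials, and keeping track of the powers of $q$ created by the re-indexing is where a slip is most likely; should the general $k$ prove unwieldy one may instead verify the anchor identity only for a subsequence of $k$ large enough that $b=a^2q^{-2k}\in\{|b|>|a^2/st|\}$, which still accumulates at $b=\infty$ and suffices for the continuation.
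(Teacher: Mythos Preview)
Your proposal is correct and follows essentially the same route as the paper: evaluate the bilateral series at $b=a^2q^{-2k}$ so that it truncates below, re-index and sum by Theorem~\ref{1} with the shifted parameters $(aq^{-2k},sq^{-k},tq^{-k})$, then invoke the identity theorem on a sequence accumulating at the origin in the variable $a^2/b$ (the paper uses $x=a^2/b$ rather than your $w=1/b$, a cosmetic difference). The only presentational divergence is that the paper splits the bilateral sum into two unilateral pieces with explicit rational dependence on $x$, making analyticity near $x=0$ immediate, whereas you argue directly that $F(1/w)$ extends across $w=0$; both are valid.
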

For miscellaneous summation formulas of basic hypergeometric series,
the readers can consult Gasper and Rahman \cite{gasper}.

This paper is organized as follows.

In Section \ref{mainproof}, we will firstly prove Theorem \ref{1}
from a terminating summation formula in \cite{gasper}, and then,
Theorem \ref{2} will be deduced from Theorem \ref{1}. In both the
proofs, a method similar to that in \cite{ismail} and \cite{askism}
is used, where Ramanujan's $_1\psi_1$ and Bailey's $_6\psi_6$
summation formulas were proved respectively.

In Section \ref{qana},   we will prove that Theorem \ref{1} is a
$q$-analogue of a $_4F_3$-summation formula in Andrews, Askey and
Roy's book \cite{aar}.

In Section \ref{specia}, special cases of Theorem \ref{2} will be
considered. We will prove that Theorem \ref{2} is a generalization
of Jacobi's triple product identity and the quintuple product
identity.

In the following,  LHS (or RHS) means the left (or right) hand side
of a certain equality and $\textbf{N}$ denotes the set of
nonnegative integers.

%%%%%%%%%%%%%%%%%%%%%%%%%%%%%%%%%%%%%%%%%%%%%%%%%%%%%%%%%%%%%%%%%%%%%%%%%%%%%

\section{Proofs of  Theorem \ref{1} and \ref{2}  } \label{mainproof}
The lemma ( See, for example, \cite[p.90, Thm 1.2]{lang}) below is
the foundation of our proofs in this section.
\begin{lem}\label{thmfunde}
 Let $U$ be a connected open set and $f$, $g$
be analytic on $U$.   If $f$ and $g$ agree infinitely often near an
interior point of $U$, then we have $ f(z)=g(z)$ ~for all~$z\in U$.
\end{lem}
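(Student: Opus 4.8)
The plan is to reduce everything to the classical identity theorem for analytic functions. Put $h=f-g$, which is analytic on $U$. The hypothesis that $f$ and $g$ agree infinitely often near an interior point $z_0\in U$ means that every neighbourhood of $z_0$ contains infinitely many zeros of $h$; since $z_0$ is interior, some closed disk about $z_0$ lies in $U$, and Bolzano--Weierstrass then produces a sequence $z_n\to z_0$ with $z_n\ne z_0$ and $h(z_n)=0$. Thus $z_0$ is an accumulation point, lying in $U$, of the zero set $Z=\{z\in U: h(z)=0\}$.

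First I would analyse $h$ locally at $z_0$. Expand $h(z)=\sum_{k\ge 0}c_k(z-z_0)^k$ on some disk $D\subseteq U$ centred at $z_0$, and claim that every $c_k=0$. If not, let $m$ be the least index with $c_m\ne 0$; then on $D$ we may write $h(z)=(z-z_0)^m\phi(z)$ with $\phi(z)=\sum_{k\ge 0}c_{m+k}(z-z_0)^k$ analytic and $\phi(z_0)=c_m\ne 0$. By continuity $\phi$ is nonvanishing on a smaller disk, so $h$ has no zero in a punctured neighbourhood of $z_0$, contradicting the accumulation of zeros at $z_0$. Hence all $c_k=0$ and $h$ vanishes identically on $D$.

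Next I would globalise using connectedness of $U$. Let $A=\{z\in U: h^{(k)}(z)=0\ \text{for all}\ k\ge 0\}$. It is nonempty, since $z_0\in A$ by the previous step; it is closed in $U$, being an intersection of zero sets of the continuous functions $h^{(k)}$; and it is open, since if $w\in A$ the Taylor series of $h$ at $w$ vanishes, so $h\equiv 0$ on a disk about $w$ and that whole disk lies in $A$. As $U$ is connected and $A$ is a nonempty clopen subset, $A=U$, so $h\equiv 0$ on $U$, i.e. $f(z)=g(z)$ for all $z\in U$.

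There is no serious obstacle here, this being a standard fact (quoted from Lang); the only point that needs a word of care is the passage from ``agree infinitely often near an interior point'' to ``the agreement set has an accumulation point inside $U$'', which is exactly where the interiority of $z_0$ is used, after which the local-factorization step and the open--closed argument are routine.
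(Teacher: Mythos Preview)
Your proof is correct and is the standard argument for the identity theorem: pass to $h=f-g$, use the power-series factorization at an accumulation point of zeros to show $h$ vanishes on a disk, and then run the clopen argument on the set where all derivatives vanish to propagate throughout the connected domain $U$. The only subtlety, which you handle properly, is extracting an accumulation point in $U$ from the phrase ``agree infinitely often near an interior point'' via compactness of a closed disk about $z_0$.

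As for comparison with the paper: the paper does not give its own proof of this lemma at all. It simply quotes the result as a standard fact from Lang's \emph{Complex Analysis} (p.~90, Theorem~1.2) and uses it as a black box in the proofs of Theorems~1.1 and~1.2. So your write-up goes beyond what the paper does, supplying the textbook argument that the paper merely cites.
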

\begin{proof}[Proof of Theorem \ref{1}]
We begin with the identity \cite[p. 98, Eq. (3.10.5)]{gasper}:
\begin{eqnarray}\label{2phizz}\\
{}\Phi \left(
\begin{array}{ccc}
  a^2,aq^2,-aq^2&:&-aq/w,q^{-n}\\ a,-a&:&w,-aq^{n+1}
\end{array}
;q^2,q;{wq^{n-1}\over a}
\right)=\frac{(-aq,aq^2/w,w/aq;q)_n}{(-q,aq/w,w;q)_n},\notag\\
n\in\textbf{N}.            \notag
\end{eqnarray}
Rewrite \eqref{2phizz} to be
\begin{eqnarray} \label{lhsz1z} \\
\sum_{k=0}^{\infty}\frac{(a^2,aq^2,-aq^2;q^2)_k(-aq/w;q)_k(q^n-1)(q^n-q)\cdots(q^n-q^{k-1})}
{(q^2,a,-a;q^2)_k (w;q)_k(1+aq^{n+1})(1+aq^{n+2})\cdots(1+aq^{n+k})}
\left(\frac{w}{aq}\right)^k \notag\\
=\frac{(-aq,aq^2/w,w/aq,-q^{n+1},aq^{n+1}/w,wq^{n};q)_\infty}{(-q,aq/w,w,-aq^{n+1},aq^{n+2}/w,wq^{n-1}/a;q)_\infty}.
\notag
\end{eqnarray}
Set
\begin{eqnarray*} \label{}
f_1(z)=
\sum_{k=0}^{\infty}\frac{(a^2,aq^2,-aq^2;q^2)_k(-aq/w;q)_k(z-1)(z-q)\cdots(z-q^{k-1})}
{(q^2,a,-a;q^2)_k (w;q)_k(1+azq)(1+azq^{2})\cdots(1+azq^{k})}
\left(\frac{w}{aq}\right)^k  \notag
\end{eqnarray*}
and
\begin{eqnarray*} \label{}
f_2(z)=\frac{(-aq,aq^2/w,w/aq,-zq,azq/w,wz;q)_\infty}{(-q,aq/w,w,-azq,azq^{2}/w,wz/aq,;q)_\infty}.
\end{eqnarray*}
%where $|z|<\frac{1}{aq}$ for both $f_1(z)$ and $f_2(z)$.
Then, \eqref{lhsz1z} shows
\begin{equation}\label{ff}f_1(z)=f_2(z)\end{equation}
 in $z=q^n$
with $n\in \textbf{N}$. According to Lemma \ref{thmfunde}, we have
\eqref{ff}
 for all $|z|<\min\left\{\frac{1}{|aq|},
\left|\frac{w}{aq^2}\right|, \left|\frac{aq}{w}\right| \right\}$.
%which is
%\begin{eqnarray} \label{}
%\qquad\qquad\sum_{k=0}^{\infty}\frac{(a^2,aq^2,-aq^2;q^2)_k(-aq/w;q)_k(z-1)(z-q)\cdots(z-q^{k-1})}
%{(q^2,a,-a;q^2)_k (w;q)_k(1+azq)(1+azq^{2})\cdots(1+azq^{k})}
%\left(\frac{w}{aq}\right)^k      \\
%=\frac{(-aq,aq^2/w,w/aq,-zq,azq/w,wz;q)_\infty}{(-q,aq/w,w,-azq,azq^{2}/w,wz/aq;q)_\infty}.\notag
%\end{eqnarray}
By analytic continuation, the restriction on $z$ may be relaxed. Let
$z=-\frac{u}{aq}$ in \eqref{ff}. Then, simple computation gives
\begin{eqnarray}\label{2phizzz}
\Phi \left(
\begin{array}{cccccc}
  a^2,&aq^2,&-aq^2&:&-aq/w,&-aq/u\\ &a,&-a&:&w,&u
\end{array}
;q^2,q;-{wu\over a^2q^2}
\right)\\=-\frac{(u+w)}{aq}\frac{(-aq,w/a,u/a,-wu/aq;q)_\infty}{(-q,w,u,-wu/a^2q^2;q)_\infty}.\notag
\end{eqnarray}
 Replacing $a$ by $-a$ in \eqref{2phizzz}, and then, replacing $w$ by $\frac{aq}{s}$ and $u$ by
$\frac{aq}{t}$  respectively, we obtain Theorem \ref{1}. This
completes the proof.
\end{proof}

\begin{proof}[Proof of  Theorem \ref{2}] For $m\in \textbf{N}$, we have
\begin{eqnarray} \label{guocheng}
\lefteqn{\sum_{k=-m}^{\infty}\frac{(aq^2,-aq^2,a^2q^{-2m};q^2)_k(s,t;q)_k}
{(a,-a,q^{2+2m};q^2)_k (aq/s,aq/t;q)_k}
\left(-\frac{q^{2m}}{st}\right)^k} \\
&=&
\sum_{k=0}^{\infty}\frac{(aq^2,-aq^2,a^2q^{-2m};q^2)_{k-m}(s,t;q)_{k-m}}
{(a,-a,q^{2+2m};q^2)_{k-m} (aq/s,aq/t;q)_{k-m}}
\left(-\frac{q^{2m}}{st}\right)^{k-m}  \notag  \\
&=& \frac{(a^2q^{-2m};q^2)_{-m}(s,t;q)_{-m}} {(q^{2+2m};q^2)_{-m}
(aq/s,aq/t;q)_{-m}}
\left(-\frac{st}{q^{2m}}\right)^{m}\frac{(1-a^2q^{-4m})}{(1-a^2)} \notag\\
&&\times
\sum_{k=0}^{\infty}\frac{(1-a^2q^{4(k-m)})}{(1-a^2q^{-4m})}\frac{(a^2q^{-4m};q^2)_{k}(sq^{-m},tq^{-m};q)_{k}}
{(q^{2};q^2)_{k} (aq^{1-m}/s,aq^{1-m}/t;q)_{k}}
\left(-\frac{q^{2m}}{st}\right)^{k}.
  \notag
\end{eqnarray}
Apply  Theorem \ref{1} to summarize  the last sum above and note
that
\begin{equation*}
%\label{aq-n}
(a;q)_{-m}=\frac{1}{(aq^{-m};q)_{m}}=\frac{(-1)^mq^{m(m+1)/2}}{a^m(q/a;q)_m}.
\end{equation*}
Then,
\begin{eqnarray} \label{guochen}
\lefteqn{\mbox{RHS of
\eqref{guocheng}}}\hspace{-0.3cm}\\&=&\frac{(q^{2};q^2)_{m}(s/a,t/a;q)_{m}}
{(q^{2m+2}/a^2;q^2)_{m}
(q/s,q/t;q)_{m}}\frac{q^{m(m+3)}(1-a^2q^{-4m}) }{s^mt^m(1-a^2)}
\notag\\
&&\times\frac{(s+t)q^m}{st}\frac{(aq^{1-2m},-q^{1+m}/s,-q^{1+m}/t,-aq/st;q)_\infty}{(-q,aq^{1-m}/s,aq^{1-m}/t,-q^{2m}/st;q)_\infty}
\notag  \\
&=&\frac{(s+t)(a+q^{2m})(q^2;q^2)_m(q/a;q)_{2m}(aq,-q/s,-q/t,aq/st;q)_\infty}
{st(1+a)(q^{2m+2}/a^2,q^2/s^2,q^2/t^2;q^2)_{m}(-q,aq/s,aq/t,-q^{2m}/st;q)_\infty}
\frac{}{}\notag\\
&=&
\frac{a(s+t)}{(1+a)st}\frac{(q^{2m+2}/s^2,q^{2m+2}/t^2;q^2)_\infty(q,-q^{2m}/a,q/a;q)_{\infty}}{
(q^{2m+2},q^{2m+2}/a^2;q^2)_{\infty}(q/s,q/t,aq/s,aq/t;q)_{\infty}}
\notag
\\&&\times
\frac{(aq,aq/st;q)_\infty}{(-q^{2m}/st;q)_\infty}.\notag
\end{eqnarray}
In \eqref{guocheng} and \eqref{guochen}, we have proved that
\begin{eqnarray} \label{guot}
\\
\lefteqn{\sum_{k=0}^{\infty}\frac{(aq^2,-aq^2;q^2)_k(s,t;q)_k(q^{2m}-a^2)(q^{2m}-a^2q^2)\cdots(q^{2m}-a^2q^{2k-2})}
{(a,-a;q^2)_k
(aq/s,aq/t;q)_k(1-q^{2m+2})(1-q^{2m+4})\cdots(1-q^{2m+2k})}
\left(-\frac{1}{st}\right)^k\notag}\\
\lefteqn{+\sum_{k=1}^{\infty}\frac{(q^2/a,-q^2/a;q^2)_k(s/a,t/a;q)_k(q^{2m}-1)\cdots(q^{2m}-q^{2k-2})}
{(1/a,-1/a;q^2)_k
(q/s,q/t;q)_k(1-q^{2m+2}/a^2)\cdots(1-q^{2m+2k}/a^2)}
\left(-\frac{1}{st}\right)^k \notag} \\
&=&
\frac{a(s+t)}{(1+a)st}\frac{(q^{2m+2}/s^2,q^{2m+2}/t^2;q^2)_\infty(q,-q^{2m}/a,q/a,
aq,aq/st;q)_\infty}{(q^{2m+2},q^{2m+2}/a^2;q^2)_{\infty}(q/s,q/t,aq/s,aq/t,-q^{2m}/st;q)_\infty}.
\notag\notag
\end{eqnarray}
Put
\begin{eqnarray*} \label{g1}
g_1(x)
&=&\sum_{k=0}^{\infty}\frac{(aq^2,-aq^2;q^2)_k(s,t;q)_k(x-a^2)(x-a^2q^2)\cdots(x-a^2q^{2k-2})}
{(a,-a;q^2)_k (aq/s,aq/t;q)_k(1-xq^2)(1-xq^4)\cdots(1-xq^{2k})}
\left(-\frac{1}{st}\right)^k\notag\\
&&+\sum_{k=1}^{\infty}\frac{(q^2/a,-q^2/a;q^2)_k(s/a,t/a;q)_k(x-1)\cdots(x-q^{2k-2})}
{(1/a,-1/a;q^2)_k (q/s,q/t;q)_k(1-xq^2/a^2)\cdots(1-xq^{2k}/a^2)}
\left(-\frac{1}{st}\right)^k\notag%\\
%&=&\sum_{k=-\infty}^{\infty}\frac{(aq^2,-aq^2,a^2/x;q^2)_k(s,t;q)_k}
%{(a,-a,xq^{2};q^2)_k (aq/s,aq/t;q)_k} \left(-\frac{x}{st}\right)^k
%\notag \notag
\end{eqnarray*} and
\begin{equation*} \label{g2}
g_2(x)=\frac{a(s+t)}{(a+1)st}\frac{(xq^{2}/s^2,xq^{2}/t^2;q^2)_\infty(-x/a,q,q/a,aq,aq/st;q)_\infty}{
(xq^2,xq^{2}/a^2;q^2)_{\infty}(q/s,q/t,aq/s,aq/t,-x/st;q)_\infty}.
\end{equation*}
%Note that
%\begin{eqnarray*} \label{}
%g_1(x)&=&\sum_{k=0}^{\infty}\frac{(aq^2,-aq^2,a^2/x;q^2)_k(s,t;q)_k}
%{(a,-a,xq^{2};q^2)_k (aq/s,aq/t;q)_k}
%\left(-\frac{x}{st}\right)^k\notag\\
%&&+\sum_{k=1}^{\infty}\frac{(q^2/a,-q^2/a,-1/x;q^2)_k(s/a,t/a;q)_k}
%{(1/a,-1/a,xq^{2}/a^2;q^2)_k (q/s,q/t;q)_k}
%\left(-\frac{x}{st}\right)^k
%\notag\\
%&=&\sum_{k=0}^{\infty}\frac{(aq^2,-aq^2;q^2)_k(s,t;q)_k(x-a^2)(x-a^2q^2)\cdots(x-aq^{2k-2})}
%{(a,-a;q^2)_k (aq/s,aq/t;q)_k(1-xq^2)(1-xq^4)\cdots(1-xq^{2k})}
%\left(-\frac{1}{st}\right)^k\notag\\
%&&+\sum_{k=1}^{\infty}\frac{(q^2/a,-q^2/a,;q^2)_k(s/a,t/a;q)_k(x-1)\cdots(x-q^{2k-2})}
%{(1/a,-1/a;q^2)_k (q/s,q/t;q)_k(1-xq^2/a^2)\cdots(1-xq^{2k}/a^2)}
%\left(-\frac{1}{st}\right)^k \notag\\
%\end{eqnarray*}
Then \eqref{guot} shows that
\begin{equation}\label{g1g2}
g_1(x)=g_2(x)
\end{equation}
for $x=q^{2m}$ with $m\in \textbf{N}$.  According to Lemma
\ref{thmfunde}, \eqref{g1g2} holds for all
$|x|<\min\left\{\frac{1}{|q^2|}, \left|\frac{a}{q}\right|^2, |st|,
\left|\frac{a}{q}\right| \right\}$. By analytic continuation, the
restriction on $x$ may be relaxed. Rewrite \eqref{g1g2} as
\begin{eqnarray} \label{zuizhongq}
\lefteqn{\sum_{k=-\infty}^{\infty}\frac{(aq^2,-aq^2,a^2/x;q^2)_k(s,t;q)_k}
{(a,-a,xq^{2};q^2)_k (aq/s,aq/t;q)_k} \left(-\frac{x}{st}\right)^k}
\\
&=&\frac{a(s+t)}{(a+1)st}\frac{(xq^{2}/s^2,xq^{2}/t^2;q^2)_\infty(-x/a,q,q/a,aq,aq/st;q)_\infty}{
(xq^{2},xq^{2}/a^2;q^2)_{\infty}(q/s,q/t,aq/s,aq/t,-x/st;q)_\infty}.\notag
\end{eqnarray}
Taking $x=a^2/b$ in \eqref{zuizhongq}, we arrive at  Theorem
\ref{2}, which completes the proof.
\end{proof}
There are many terminating summation formulas in \cite[p. 96--100,
\textsection 3.10]{gasper}, but \cite[p. 98, Eq.(3.10.5)]{gasper},
i.e. \eqref{2phizz} in this paper, is the only one we can generalize
to be nonterminating, and then,  bilateral in this way. The
difficulty is that we can't find an open set near zero such that two
analytic functions agree infinitely often.

The LHS of the formulas in Theorem \ref{1} can also be rewritten as
a $_8\phi_7$ series, and,  the LHS of the formulas in Theorem
\ref{2} can also be rewritten as a $_8\psi_8$.

\section{From  Theorem \ref{1} to a $_4F_3$-summation
formula}\label{qana}

The hypergeometric series $_rF_s$ is defined by \cite[p. 62, Eq.
(2.1.2)]{aar}:
\begin{equation*}\label{rfs}
_rF_s\left(\begin{array}{c} a_1,\ldots, a_r\\b_1,\ldots,b_s
\end{array};x\right)=\sum_{n=0}^{\infty}\frac{(a_1)_n\ldots(a_r)_n}{(b_1)_n\ldots(b_s)_n}\frac{x^n}{n!},
\end{equation*}
where $(a)_n=a(a+1)\cdots(a+n-1)$ for $n>0$ and $(a)_0=1$. The
$\Gamma$-function and its $q$-analogue, say, the
$\Gamma_q$-function, are defined, respectively,  by:
\begin{equation}\label{ggq}
\Gamma(x)=\lim_{n\rightarrow\infty}\frac{n!n^{x-1}}{(x)_n}
\mbox{\qquad and\qquad}
\Gamma_q(x)=\frac{(q;q)_\infty}{(q^x;q)_\infty}(1-q)^{1-x}.
\end{equation}
The definitions and properties of the $\Gamma$- and
$\Gamma_q$-functions can be found in the books \cite{aar} and
\cite{gasper}.  Note that
$\lim_{q\rightarrow1^{-}}\Gamma_q(x)=\Gamma(x)$.

We state the $_4F_3$-summation formula in the following.
\begin{cor}\cite[p. 148, Cor.
3.5.3]{aar}\label{aaa}
\begin{equation*}\label{4f3}
_4F_3\left(\begin{array}{cccc}
a,&{a\over2}+1,&c,&d\\&{a\over2},&a-c+1,&a-d+1
\end{array};-1\right)=\frac{\Gamma{(a-c+1)}\Gamma{(a-d+1)}}{\Gamma{(a+1)}\Gamma{(a-c-d+1)}}.
\end{equation*}
\end{cor}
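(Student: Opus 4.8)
The plan is to derive Corollary \ref{aaa} as the $q\to1^{-}$ limit of Theorem \ref{1}; this is precisely why Section \ref{qana} follows Section \ref{mainproof}. Write $\tilde a$ for the parameter denoted $a$ in Theorem \ref{1} (to avoid a clash with the $a$ of Corollary \ref{aaa}) and specialize $\tilde a=q^{a}$, $s=q^{c}$, $t=q^{d}$. Then the argument $-1/(st)=-q^{-c-d}$ tends to $-1$, the argument of the $_4F_3$. For the left-hand side of Theorem \ref{1}, I first collapse the two well-poised pairs: using $(\tilde aq^{2};q^{2})_k/(\tilde a;q^{2})_k=(1-\tilde aq^{2k})/(1-\tilde a)$ and the same identity with $\tilde a\mapsto-\tilde a$, the $k$-th term of $\Phi$ becomes
$$\frac{1-\tilde a^{2}q^{4k}}{1-\tilde a^{2}}\,\frac{(\tilde a^{2};q^{2})_k}{(q^{2};q^{2})_k}\,\frac{(s,t;q)_k}{(\tilde aq/s,\tilde aq/t;q)_k}\Bigl(-\tfrac{1}{st}\Bigr)^{k}.$$
Now I let $q\to1^{-}$ termwise, using $(q^{\beta};q)_k\sim(1-q)^{k}(\beta)_k$, $(q^{2\beta};q^{2})_k\sim\bigl(2(1-q)\bigr)^{k}(\beta)_k$, and $(1-\tilde a^{2}q^{4k})/(1-\tilde a^{2})\to 1+2k/a=(a/2+1)_k/(a/2)_k$. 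The powers of $1-q$ from $(s,t;q)_k$ and $(\tilde aq/s,\tilde aq/t;q)_k$ cancel, so the $k$-th term converges to $\dfrac{(a)_k\,(a/2+1)_k\,(c)_k\,(d)_k}{k!\,(a/2)_k\,(a-c+1)_k\,(a-d+1)_k}(-1)^{k}$, i.e. the $k$-th term of the $_4F_3$ in Corollary \ref{aaa}.

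To pass the limit under the summation sign I will invoke dominated convergence (Tannery's theorem): in the region $\operatorname{Re}(c+d)<0$ one has $|q^{-c-d}|<1$ for all $q$ near $1$, and the terms admit a $q$-uniform geometric majorant, so the interchange is legitimate there. Both sides of the asserted identity are meromorphic in $a,c,d$, so the general statement (within the convergence range of Corollary \ref{aaa}) follows by analytic continuation.

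For the right-hand side of Theorem \ref{1}, I split it as
$$\frac{s+t}{st}\cdot\frac{(-q/s,-q/t;q)_\infty}{(-q,-1/(st);q)_\infty}\cdot\frac{(\tilde aq,\tilde aq/(st);q)_\infty}{(\tilde aq/s,\tilde aq/t;q)_\infty}.$$
The first factor equals $q^{-c}+q^{-d}\to2$. In the third factor every $q$-shifted factorial has the form $(q^{x};q)_\infty=(q;q)_\infty(1-q)^{1-x}/\Gamma_q(x)$ from \reff{ggq}; the exponents of $1-q$ cancel (since $(-a)+(-a+c+d)=(-a+c)+(-a+d)$) and, because $\Gamma_q(x)\to\Gamma(x)$, this factor tends to $\Gamma(a-c+1)\Gamma(a-d+1)/\bigl(\Gamma(a+1)\Gamma(a-c-d+1)\bigr)$, exactly the right-hand side of Corollary \ref{aaa}. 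For the middle factor I will use $(-q^{\beta};q)_\infty=(q^{2\beta};q^{2})_\infty/(q^{\beta};q)_\infty$ together with both the base-$q$ formula above and its base-$q^{2}$ analogue $(q^{2y};q^{2})_\infty=(q^{2};q^{2})_\infty(1-q^{2})^{1-y}/\Gamma_{q^{2}}(y)$; after the $(q;q)_\infty$ and $(q^{2};q^{2})_\infty$ cancel, the middle factor reduces to $(1+q)^{-1}$ times a quotient of $\Gamma_q$- and $\Gamma_{q^{2}}$-values whose arguments match in numerator and denominator, hence it tends to $\tfrac12$. Multiplying the three limits, $2\cdot\tfrac12\cdot\Gamma(a-c+1)\Gamma(a-d+1)/\bigl(\Gamma(a+1)\Gamma(a-c-d+1)\bigr)$, yields the corollary.

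I expect the main obstacle to be the rigorous justification of the limit interchange: as $q\to1^{-}$ the argument $-q^{-c-d}$ approaches the unit circle, so the $_4F_3$ at $-1$ sits on the boundary of convergence and cannot be handled carelessly. Restricting to $\operatorname{Re}(c+d)<0$ and then continuing analytically in $a,c,d$ is the way around it; the remaining work is routine bookkeeping with the two $\Gamma_q$-functions and the elementary asymptotics of $q$-shifted factorials.
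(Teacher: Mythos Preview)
Your proposal is correct and follows essentially the same route as the paper: the same substitution $a\mapsto q^{a}$, $s\mapsto q^{c}$, $t\mapsto q^{d}$ in Theorem \ref{1}, the same temporary restriction $\operatorname{Re}(c+d)<0$ followed by analytic continuation, and the same conversion of the right-hand side into $\Gamma_q$- and $\Gamma_{q^{2}}$-values via $(-q^{\beta};q)_\infty=(q^{2\beta};q^{2})_\infty/(q^{\beta};q)_\infty$. Your organization of the right side into three factors with limits $2$, $\tfrac12$, and the desired $\Gamma$-quotient is just a repackaging of the paper's single chain \eqref{2piq}; your explicit appeal to Tannery's theorem for the termwise limit is in fact more careful than the paper, which simply lets $q\to1^{-}$ without comment.
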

\begin{proof} Performing the replacement: $a\rightarrow q^a$,  $s\rightarrow
q^c$ and $t\rightarrow q^d$, in Theorem \ref{1}, gives
\begin{eqnarray}\label{2phiq}
\sum_{k=0}^\infty\frac{(q^{2a},q^{a+2},-q^{a+2};q^2)_k(q^c,q^d;q)_k}{(q^2,q^a,-q^a;q^2)_k(q^{a-c+1},q^{a-d+1};q)_k}(-q^{-(c+d)})^k\\
=\frac{(q^c+q^d)}{q^{c+d}}\frac{(q^{1+a},-q^{1-c},-q^{1-d},q^{a-c-d+1};q)_\infty}{(-q,q^{a-c+1},q^{a-d+1},-q^{-c-d};q)_\infty}\notag,
\end{eqnarray}
where we temporarily assume $\rm {Re}(c+d)<0$ for the convergence of
the series. Using the definitions in \eqref{ggq}, we have
\begin{eqnarray}\label{2piq}
%\frac{(q^c+q^d)}{q^{c+d}}
&&\frac{(q^c+q^d)}{q^{c+d}}\frac{(q^{1+a},-q^{1-c},-q^{1-d},q^{a-c-d+1};q)_\infty}{(-q,q^{a-c+1},q^{a-d+1},-q^{-c-d};q)_\infty}\\
&&=\frac{(q^c+q^d)}{q^{c+d}}\frac{(q,q^{1+a},q^{a-c-d+1},q^{-c-d};q)_\infty(q^{2(1-c)},q^{2(1-d)};q^2)_\infty}
{(q^{a-c+1},q^{a-d+1},q^{1-c},q^{1-d};q)_\infty(q^2,q^{2(-c-d)};q^2)_\infty}\notag\\
&&=\frac{(q^c+q^d)(1-q)}{q^{c+d}(1-q
^2)}\frac{\Gamma_q(a-c+1)\Gamma_q(a-d+1)\Gamma_q(1-c)\Gamma_q(1-d)\Gamma_{q^2}(-c-d)}
{\Gamma_q(a+1)\Gamma_q(a-c-d+1)\Gamma_q(-c-d)\Gamma_{q^2}(1-c)\Gamma_{q^2}(1-d)}\notag\\
&&\rightarrow\frac{\Gamma(a-c+1)\Gamma(a-d+1)\Gamma(1-c)\Gamma(1-d)\Gamma(-c-d)}
{\Gamma(a+1)\Gamma(a-c-d+1)\Gamma(-c-d)\Gamma(1-c)\Gamma(1-d)}\quad(q\rightarrow1^-)\notag\\
&&\phantom{\rightarrow}=\frac{\Gamma(a-c+1)\Gamma(a-d+1)}
{\Gamma(a+1)\Gamma(a-c-d+1)}\notag.
\end{eqnarray}
On the other hand, we have, for $k\in\textbf{N}$,
\begin{eqnarray}\label{bh}
&&\frac{(q^{2a},q^{a+2},-q^{a+2};q^2)_k(q^c,q^d;q)_k}{(q^2,q^a,-q^a;q^2)_k(q^{a-c+1},q^{a-d+1};q)_k}\\
&&\rightarrow\frac{2^k(a)_k4^k({a\over2}+1)_k(c)_k(d)_k}{2^k(k!)4^k({a\over2})_k(a-c+1)_k(a-d+1)_k}\quad(q\rightarrow1^-)\notag\\&&
\phantom{\rightarrow}=\frac{(a)_k({a\over2}+1)_k(c)_k(d)_k}{k!({a\over2})_k(a-c+1)_k(a-d+1)_k}.\notag
\end{eqnarray}
Letting $q\rightarrow1^-$ on both sides of \eqref{2phiq}, and
combining with \eqref{2piq} and \eqref{bh}, we obtain
\begin{eqnarray}\label{2phq}
\sum_{k=0}^\infty\frac{(a)_k({a\over2}+1)_k(c)_k(d)_k}{({a\over2})_k(a-c+1)_k(a-d+1)_k}\frac{(-1)^k}{k!}
=\frac{\Gamma(a-c+1)\Gamma(a-d+1)}
{\Gamma(a+1)\Gamma(a-c-d+1)}.\notag
\end{eqnarray}
This is the identity in Corollary \ref{aaa}. By analytic
continuation, the restriction $\rm {Re}(c+d)<0$ may be relaxed. This
completes the proof.
\end{proof}
For another  $q$-analogue of the formula in Corollary \ref{aaa}, we
see the VWP-balanced $_6\phi_5$ summation formula \cite[p. 44, Eq.
(2.7.1)]{gasper}:
\begin{eqnarray*} \label{ongq}
\sum_{k=0}^{\infty}\frac{(a, qa^{1\over2},-qa^{1\over2},b,c,d;q)_k}
{(q,a^{1\over2},-a^{1\over2},aq/b,aq/c,aq/d;q)_k }
\left(\frac{aq}{bcd}\right)^k
=\frac{(aq,aq/bc,aq/bd,aq/cd;q)_\infty}{
(aq/b,aq/c,aq/d,aq/bcd;q)_\infty},\notag
\end{eqnarray*}
where $\left|{aq\over bcd}\right|<1$. Let $b\rightarrow\infty$ to
get
\begin{eqnarray} \label{onq}
\sum_{k=0}^{\infty}\frac{(a, qa^{1\over2},-qa^{1\over2},c,d;q)_k}
{(q,a^{1\over2},-a^{1\over2},aq/c,aq/d;q)_k }q^{k(k-1)\over2}
\left(-\frac{aq}{cd}\right)^k =\frac{(aq,aq/cd;q)_\infty}{
(aq/c,aq/d;q)_\infty}.
\end{eqnarray}
In a similar way as that of the proof above, we can verify that
\eqref{onq}  is also a $q$-analogue of the formula in Corollary
\ref{aaa}.

\section{Some special cases of Theorem \ref{2}} \label{specia}
Letting $b\rightarrow\infty$ in Theorem \ref{2}, we have
\begin{prop}\label{four3}There holds
\begin{equation*}\label{s1}
\sum_{k=-\infty}^{\infty}\left(1-a^2q^{4k}\right)\frac{(s,t;q)_k} {
(aq/s,aq/t;q)_k}q^{k^2-k}\left(\frac{a^2}{st}\right)^k=\frac{a(s+t)}{st}\frac{(q,q/a,a,aq/st;q)_\infty}{
(q/s,q/t,aq/s,aq/t;q)_\infty}.
\end{equation*}
\end{prop}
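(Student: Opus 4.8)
The plan is to obtain Proposition \ref{four3} as the limiting case $b\to\infty$ of Theorem \ref{2}, carrying the limit through term-by-term on the bilateral series and through the infinite products on the right-hand side. First I would rewrite the bilateral $\Psi$-series in Theorem \ref{2} as an explicit sum $\sum_{k=-\infty}^{\infty}$ over $k\in\Z$, isolating the two factors that involve $b$, namely $(b;q)_k$ in the numerator and $(a^2q^2/b;q^2)_k$ in the denominator, together with the power of $-a^2/(bst)$. For fixed $k$ one has $\lim_{b\to\infty}(b;q)_k\,b^{-k}=(-1)^kq^{k(k-1)/2}$ and $\lim_{b\to\infty}(a^2q^2/b;q^2)_k=1$; combining these with the explicit $b^{-k}$ coming from $(-a^2/(bst))^k$ produces exactly the factor $q^{k^2-k}(a^2/st)^k$ up to the sign $(-1)^k$ that is absorbed, and the remaining $q^2$-shifted factorials $(aq^2,-aq^2;q^2)_k/(a,-a;q^2)_k$ collapse to the very-well-poised factor $1-a^2q^{4k}$ divided by $1-a^2$ (using $(aq^2;q^2)_k(-aq^2;q^2)_k=(a^2q^2;q^4\cdot)\dots$, i.e. the standard identity $(a;q^2)_k(-a;q^2)_k=(a^2;q^4)_k$ applied to both lines so that the ratio is $(1-a^2q^{4k})/(1-a^2)$). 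This gives, after clearing the common $1-a^2$, precisely the left-hand side of Proposition \ref{four3}.

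Next I would take the $b\to\infty$ limit on the right-hand side of Theorem \ref{2}. The relevant $b$-dependent infinite products are $(-a/b;q)_\infty$, $(a^2q^2/bs^2,a^2q^2/bt^2;q^2)_\infty$ in the numerator and $(-a^2/bst;q)_\infty$, $(a^2q^2/b,q^2/b;q^2)_\infty$ in the denominator. Each of these tends to $1$ as $b\to\infty$, so the right-hand side converges to
\[
\frac{a(s+t)}{(a+1)st}\,\frac{(q,q/a,aq,aq/st;q)_\infty}{(q/s,q/t,aq/s,aq/t;q)_\infty}.
\]
To match the stated form in Proposition \ref{four3} I would combine $\dfrac{a}{a+1}\,(aq;q)_\infty$ with the relation $(a;q)_\infty=(1-a)\cdots$; more cleanly, note $\dfrac{1}{a+1}(q/a;q)_\infty(aq;q)_\infty = (q/a;q)_\infty\,\dfrac{(aq;q)_\infty}{1+a}$, and since we also want a bare factor $(a;q)_\infty$ on the right of the Proposition, observe that $(a;q)_\infty=(1-a)(aq;q)_\infty$ and that the $1-a$ we divided out of the left-hand side earlier reappears here, so everything is consistent; the factor $a+1$ is exactly the residual $1+a$ left over from the very-well-poised collapse on the plus/minus lines. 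After this bookkeeping the right-hand side becomes $\dfrac{a(s+t)}{st}\,\dfrac{(q,q/a,a,aq/st;q)_\infty}{(q/s,q/t,aq/s,aq/t;q)_\infty}$, as claimed.

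Finally I would address convergence and the legitimacy of the interchange of limit and summation. Theorem \ref{2} requires $|a^2/(bst)|<1$; for the bilateral series in Proposition \ref{four3} the relevant condition is that the general term decays as $|k|\to\infty$, which it does because of the $q^{k^2-k}$ factor for $k\to+\infty$ and the structure of the $q$-shifted factorials for $k\to-\infty$, so the limiting series converges absolutely for all nonzero $s,t$ (with the usual genericity assumption that no denominator factor vanishes). To justify passing $b\to\infty$ under $\sum_k$, I would exhibit, for $b$ large, a summable majorant independent of $b$ — for positive $k$ the term is $O(|q|^{k^2-k}C^k)$ uniformly, and for negative $k$ one rewrites $k=-j$ and checks the resulting expression is dominated uniformly in $b$ — and invoke dominated convergence for series. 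The main obstacle I anticipate is precisely this uniform domination on the negative tail together with the careful tracking of the $(-1)^k$ signs and the $1\pm a$ factors so that the very-well-poised factor $1-a^2q^{4k}$ and the prefactor $a(s+t)/st$ emerge in exactly the normalization stated; the analytic-function machinery of Lemma \ref{thmfunde} is not needed here since Proposition \ref{four3} follows from Theorem \ref{2} purely by a limit.
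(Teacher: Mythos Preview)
Your approach is exactly the paper's: Proposition \ref{four3} is stated there as the limit $b\to\infty$ of Theorem \ref{2}, with no further argument, and you have correctly fleshed out the term-by-term and product-by-product computation. One small slip: in Theorem \ref{2} the parameter $b$ lives in the $q^2$-block, so the relevant limit is $\lim_{b\to\infty}(b;q^2)_k\,b^{-k}=(-1)^k q^{k^2-k}$ (not $(b;q)_k$ with $q^{k(k-1)/2}$); this is in fact what produces the $q^{k^2-k}$ you want, and after multiplying through by $1-a^2=(1-a)(1+a)$ the factor $(1-a)(aq;q)_\infty=(a;q)_\infty$ gives the stated right-hand side cleanly.
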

Letting $t\rightarrow\infty$ in Propsotion \ref{four3} gives
Corollary \ref{utq}, which is a unification of Jacobi's triple
product identity and the quintuple product identity.
\begin{cor}\label{utq}
There holds
\begin{equation*}\label{tq}
\sum_{k=-\infty}^{\infty}\left(1-a^2q^{4k}\right)\frac{(s;q)_k} {
(aq/s;q)_k}q^{3k^2-3k\over2}\left(-\frac{a^2}{s}\right)^k=\frac{a(q,q/a,a;q)_\infty}{
s(q/s,aq/s;q)_\infty}.
\end{equation*}
\end{cor}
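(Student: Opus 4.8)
The plan is to deduce Corollary~\ref{utq} from Proposition~\ref{four3} by letting $t\to\infty$, as announced in the text. First I would collect the $t$-dependence of the $k$-th summand on the left of Proposition~\ref{four3} by writing it as
\[
\left(1-a^2q^{4k}\right)\frac{(s;q)_k}{(aq/s;q)_k}\,q^{k^2-k}\left(\frac{a^2}{s}\right)^k\cdot\frac{(t;q)_k}{(aq/t;q)_k}\,t^{-k},
\]
where the final factor absorbs all dependence on $t$ (the $t^{-k}$ arising from $(a^2/st)^k$). The two elementary limits needed are $\lim_{t\to\infty}(aq/t;q)_k=1$ and $\lim_{t\to\infty}t^{-k}(t;q)_k=(-1)^kq^{k(k-1)/2}$, valid for every $k\in\Z$: for $k\ge 0$ the latter is the leading coefficient in $t$ of $\prod_{i=0}^{k-1}(1-tq^i)$, and for $k<0$ it follows from $(t;q)_k=1/(tq^k;q)_{-k}$ by the same computation. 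Since $k^2-k=2\binom{k}{2}$, the product $q^{k^2-k}\,q^{k(k-1)/2}$ equals $q^{(3k^2-3k)/2}$, and $(-1)^k(a^2/s)^k=(-a^2/s)^k$; hence, termwise, the left side of Proposition~\ref{four3} tends to the series in Corollary~\ref{utq}.

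On the right I would use $\dfrac{a(s+t)}{st}=\dfrac as+\dfrac at\to\dfrac as$ together with $(aq/st;q)_\infty\to1$, $(q/t;q)_\infty\to1$ and $(aq/t;q)_\infty\to1$ as $t\to\infty$, so that the right side of Proposition~\ref{four3} tends to $\dfrac{a(q,q/a,a;q)_\infty}{s(q/s,aq/s;q)_\infty}$, which is the right side of Corollary~\ref{utq}.

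The step needing care is interchanging $\lim_{t\to\infty}$ with the bilateral summation, which I would justify by dominated convergence. Restricting to $|t|$ large (say $|t|\ge 2(1+|a|)$, which also keeps $t$ off every pole of the summand), a short estimate shows that each of the $|k|$ linear factors in $t^{-k}(t;q)_k/(aq/t;q)_k$ is bounded in modulus by a fixed constant times a power of $q$, so that the whole $k$-th summand is dominated by $|q|^{(3k^2-3k)/2}$ times a factor growing at most exponentially in $|k|$. Since $|q|^{(3k^2-3k)/2}$ decays super-exponentially both as $k\to+\infty$ and as $k\to-\infty$ (for $k=-m$ the exponent is $\tfrac32(m^2+m)$), this dominating bilateral series converges, so the termwise limits of the preceding paragraphs may be summed. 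Finally, any genericity assumption on $s$ or on the manner in which $t\to\infty$ used to run the estimate is removed by analytic continuation in $s$, which completes the proof.
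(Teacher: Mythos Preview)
Your proof is correct and follows exactly the route the paper takes: the paper simply announces that Corollary~\ref{utq} follows from Proposition~\ref{four3} by letting $t\to\infty$, without spelling out the termwise limits or the justification for interchanging limit and sum. Your proposal fills in precisely these details (the limits $t^{-k}(t;q)_k\to(-1)^k q^{k(k-1)/2}$, the behaviour of the right-hand side, and the dominated-convergence bound), so it is a fleshed-out version of the paper's one-line argument rather than a different approach.
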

Now we deduce the quintuple product identity and Jacobi's triple
product identity, respectively,  from Corollary  \ref{utq} in the
following.
%Now we prove Corollary  \ref{utq} imply the quintuple product
%identity.
\begin{cor}\label{qunn}
{\rm(Quintuple product identity \cite[p. 82]{ben})}
 For $x\neq0$, there holds
\begin{equation*}\label{ax}
\sum_{k=-\infty}^\infty(-1)^kq^{k(3k-1)\over2}x^{3k}(1+xq^k)
=\frac{(
 q,
 q/x^2,x^2;q)_\infty}{(x,q/x
;q)_\infty}.
\end{equation*}
\end{cor}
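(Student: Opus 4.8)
The plan is to derive the quintuple product identity from Corollary \ref{utq} by choosing the free parameters $a$ and $s$ appropriately. First I would look at the right-hand side of the target: it involves $(q,q/x^2,x^2;q)_\infty$ in the numerator and $(x,q/x;q)_\infty$ in the denominator, while the right-hand side of Corollary \ref{utq} reads $\frac{a(q,q/a,a;q)_\infty}{s(q/s,aq/s;q)_\infty}$. Matching $(q/a,a;q)_\infty$ against $(q/x^2,x^2;q)_\infty$ strongly suggests the substitution $a=x^2$. With $a=x^2$ the denominator becomes $s(q/s,x^2q/s;q)_\infty$, and to turn this into $(x,q/x;q)_\infty$ we want $s$ and $x^2q/s$ to be, in some order, $x$ and $q/x$; the choice $s=x$ gives $x^2q/s=xq$, so $(q/s,x^2q/s;q)_\infty=(q/x;q)_\infty(xq;q)_\infty=(q/x;q)_\infty(x;q)_\infty/(1-x)$. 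The prefactor then is $\frac{x^2}{x}\cdot(1-x)=x(1-x)$, which should reconcile with the extra factor coming from the left-hand side; I would keep careful track of the $(1-x)$ so the constants balance.

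Next I would substitute $a=x^2$, $s=x$ into the left-hand side of Corollary \ref{utq}. The summand becomes
\begin{equation*}
(1-x^4q^{4k})\,\frac{(x;q)_k}{(x^2\cdot q/x;q)_k}\,q^{\frac{3k^2-3k}{2}}\left(-\frac{x^4}{x}\right)^k
=(1-x^4q^{4k})\,\frac{(x;q)_k}{(xq;q)_k}\,q^{\frac{3k^2-3k}{2}}(-x^3)^k .
\end{equation*}
The quotient $\frac{(x;q)_k}{(xq;q)_k}$ telescopes to $\frac{1-x}{1-xq^k}$, so the summand is $(1-x)\,\frac{1-x^4q^{4k}}{1-xq^k}\,q^{\frac{3k^2-3k}{2}}(-x^3)^k$. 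Now I factor $1-x^4q^{4k}=(1-xq^k)(1+xq^k)(1+x^2q^{2k})$, which cancels the denominator and leaves $(1-x)(1+xq^k)(1+x^2q^{2k})q^{\frac{3k^2-3k}{2}}(-x^3)^k$. The piece $(1+x^2q^{2k})(-x^3)^kq^{\frac{3k^2-3k}{2}}$ I would expand as $(-x^3)^kq^{\frac{3k^2-3k}{2}}+(-1)^kx^{3k+2}q^{\frac{3k^2-3k}{2}+2k}$ and reindex the second sum (replacing $k$ by $k-1$, say) so that both contributions combine into the single series $\sum_k(-1)^kq^{\frac{k(3k-1)}{2}}x^{3k}(1+xq^k)$ times the constant $(1-x)$; the exponent bookkeeping here is the one routine computation I would actually carry out. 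Dividing out the common $(1-x)$ on both sides then yields exactly the stated identity.

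The main obstacle I anticipate is the reindexing/exponent arithmetic in the last step: one must verify that after splitting $(1+x^2q^{2k})$ and shifting the index, the two theta-type series genuinely merge into $\sum(-1)^kq^{k(3k-1)/2}x^{3k}(1+xq^k)$ rather than some near-miss with a stray power of $q$ or $x$, and one should double-check the convergence/validity of setting $a=x^2$, $s=x$ — in particular that these values lie in the region where Corollary \ref{utq} (inherited from Theorem \ref{2} via the limits $b\to\infty$, $t\to\infty$) is valid, or else invoke analytic continuation in $x$ on the punctured plane $x\neq0$. A secondary point to handle cleanly is that the limits $b\to\infty$ and $t\to\infty$ used to pass from Theorem \ref{2} to Proposition \ref{four3} and then to Corollary \ref{utq} are termwise-justified and preserve the bilateral sum; assuming those earlier results as given, this is not an issue, but I would remark on it for completeness.
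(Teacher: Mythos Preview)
Your overall strategy---specialize Corollary~\ref{utq} and rearrange the resulting bilateral sum---is the paper's strategy too, but the paper makes the cleaner substitution $a=x^2q$, $s=xq$ rather than your $a=x^2$, $s=x$. With the paper's choice the ratio $(s;q)_k/(aq/s;q)_k=(xq;q)_k/(xq;q)_k$ is identically $1$ and the right-hand side collapses directly to the target product with no extra constant; the left side is then simply $\sum_k(-1)^k(1-x^4q^{4k+2})q^{(3k^2-k)/2}x^{3k}$, which splits into two sums and one reindexing finishes the job.

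Your substitution is valid as well, but the bookkeeping you outline has two slips. First, the right-hand side carries the factor $x(1-x)$ (as you yourself computed), not just $(1-x)$, so after dividing by $(1-x)$ there is still an $x$ to absorb on the left. Second, your plan to expand only $(1+x^2q^{2k})$ while keeping $(1+xq^k)$ intact, and then reindex one piece by $k\mapsto k-1$, does not yield the target: the first piece has $q$-exponent $(3k^2-3k)/2$, not the required $(3k^2-k)/2$, and the reindexed second piece does not repair this. What does work is to expand $(1+xq^k)(1+x^2q^{2k})=1+xq^k+x^2q^{2k}+x^3q^{3k}$ fully into four theta sums; the first and fourth cancel (shift $k\mapsto k-1$ in the fourth), and the two middle sums together equal $x\sum_k(-1)^kq^{(3k^2-k)/2}x^{3k}(1+xq^k)$, supplying exactly the missing factor of $x$. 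So your route is salvageable but noticeably messier than the paper's.
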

\begin{proof}
Replace $a$ by $x^2q$ and $s$ by $xq$  in Corollary \ref{tq},
respectively. After routine simplification, we have
\begin{equation}\label{tqg}
\sum_{k=-\infty}^{\infty}(-1)^k\left(1-x^4q^{4k+2}\right)q^{3k^2-k\over2}x^{3k}=\frac{(
 q,
 q/x^2,x^2;q)_\infty}{(x,q/x
;q)_\infty}.
\end{equation}
The LHS of \eqref{tqg} is equal to
\begin{eqnarray}\label{tg}
&&\sum_{k=-\infty}^{\infty}(-1)^kq^{3k^2-k\over2}x^{3k}+\sum_{k=-\infty}^{\infty}(-1)^{k
+1}q^{3k^2+7k+4\over2}x^{3k+4}\\
&&=\sum_{k=-\infty}^{\infty}(-1)^kq^{3k^2-k\over2}x^{3k}+\sum_{k=-\infty}^{\infty}(-1)^{k
}q^{3k^2+k\over2}x^{3k+1}\notag\\
&&=\sum_{k=-\infty}^\infty(-1)^kq^{k(3k-1)\over2}x^{3k}(1+xq^k).\notag
\end{eqnarray}
Combining \eqref{tqg} and \eqref{tg}, we obtain Corollary
\ref{qunn}. This completes the proof.
\end{proof}

\begin{cor}\label{trii}
{\rm(Jacobi's triple product identity \cite[p. 497]{aar}, \cite[p.
35]{ben} or \cite[p. 15, Eq. (1.6.1)]{gasper})}
 For $a\neq0$, there holds
\begin{equation*}\label{jcobi}
\sum_{n=-\infty}^{\infty}(-1)^nq^{n(n-1)\over2}a^n=(q,q/a,a;q)_\infty.
\end{equation*}
\end{cor}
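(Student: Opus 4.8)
The plan is to deduce Jacobi's triple product identity from Corollary~\ref{utq} by a suitable specialization, exactly as the quintuple product identity was obtained in Corollary~\ref{qunn}. First I would choose parameters in Corollary~\ref{utq} that collapse the $q$-shifted factorials $(s;q)_k/(aq/s;q)_k$ into a clean power of $q$ and $a$. The natural choice is to send $s\to\infty$ (so that $(s;q)_k$ contributes a factor $(-s)^kq^{k(k-1)/2}$ in the limit while $(aq/s;q)_k\to1$), after first renaming the free variable $a$ so as not to clash. Concretely, I expect to set, say, $a\to\sqrt{b}$ or to replace $a$ and $s$ by expressions designed so that the weight $q^{3k^2-3k\over2}(-a^2/s)^k$ combines with the limiting contribution of $(s;q)_k$ to produce $q^{k(k-1)/2}$ times a clean base to the power $k$. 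One also needs the factor $1-a^2q^{4k}$ to split, as in \eqref{tg}, into two theta-type sums that reassemble into a single bilateral series.

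Here is the concrete route I would try. In Corollary~\ref{utq}, first replace $a$ by a new parameter (call it $c$) to free the letter $a$, then let $s\to\infty$. The left side becomes
\begin{equation*}
\sum_{k=-\infty}^{\infty}\bigl(1-c^2q^{4k}\bigr)q^{3k^2-3k\over2}(-1)^kc^{2k}q^{-k(k-1)\over2}\cdot(\text{powers from }s),
\end{equation*}
and after tracking the $s$-dependence carefully one is left with a bilateral sum of the shape $\sum_k(1-c^2q^{4k})q^{k^2-k}(\pm c^2)^{k}$ (up to relabeling). Splitting $1-c^2q^{4k}$ as $1-(c^2q^{2})q^{2k}\cdot q^{2k}$ and shifting the index in the second piece — the same manipulation used in \eqref{tg} — should merge the two sums into $\sum_n(-1)^nq^{n(n-1)/2}A^n$ for an appropriate $A$ depending on $c$. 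Matching this with the right side $\dfrac{c(q,q/c,c;q)_\infty}{s(q/s,cq/s;q)_\infty}$, after letting $s\to\infty$ the $s$-factors cancel against those produced on the left, leaving $(q,q/c,c;q)_\infty$ (possibly with $c$ rescaled). Renaming $c$ or $A$ to $a$ then yields $\sum_{n}(-1)^nq^{n(n-1)/2}a^n=(q,q/a,a;q)_\infty$, which is Corollary~\ref{trii}.

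The main obstacle I anticipate is bookkeeping rather than conceptual: one must take the $s\to\infty$ limit on both sides simultaneously and verify that the divergent/vanishing $s$-powers match exactly, so that the limit is finite and nontrivial. In particular, $(s;q)_k\sim(-s)^kq^{\binom k2}$ as $s\to\infty$ holds for each fixed $k$, but since the sum is bilateral one should justify the interchange of limit and summation (absolute and uniform convergence on the relevant parameter domain, which is available from the $|a^2/st|<1$-type hypotheses inherited through Proposition~\ref{four3} and Corollary~\ref{utq}), and confirm that the combined exponent of $q$ in the summand indeed reduces to $n(n-1)/2$ after the index shift. A secondary check is that the reflection/shift that fuses the two halves of $1-c^2q^{4k}$ into one series is the correct one — i.e. that it produces $1+(\text{linear in }q^k)$ telescoping into a single clean theta series, mirroring \eqref{tg} — rather than an extraneous factor. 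Once the exponents and the $s$-powers are reconciled, the identification with Jacobi's triple product is immediate.
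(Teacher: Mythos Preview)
Your instinct to let $s\to\infty$ in Corollary~\ref{utq} is exactly what the paper does, but the execution you sketch runs into a real obstruction, not just bookkeeping. If you pass to the limit termwise as written, then $(s;q)_k/(aq/s;q)_k\cdot(-a^2/s)^k\to q^{k(k-1)/2}a^{2k}$ while the right-hand side carries the prefactor $a/s\to0$. Thus the identity degenerates to
\[
\sum_{k}(1-a^2q^{4k})\,q^{2k^2-2k}a^{2k}=0,
\]
which is true (the split--shift you propose shows it) but vacuous: it cannot produce Jacobi's triple product. If instead you first multiply by $s/a$ so the right side survives, then each individual term on the left diverges like $s$, so the termwise limit does not exist and the interchange you invoke is not justified.

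The paper's fix is precisely to reverse the order you propose: it performs the split $1-a^2q^{4k}$, the index shift, and the recombination \emph{before} taking the limit, so that the two divergent pieces merge into a single summand carrying the factor $\dfrac{s(-a+a^2q^{2k-1})}{s-aq^k}$, which has a finite limit as $s\to\infty$. Only after this rearrangement does multiplying by $s/a$ and letting $s\to\infty$ yield the nontrivial identity $\sum_n(-1)^nq^{n(n-1)/2}a^n=(q,q/a,a;q)_\infty$. So the missing idea in your plan is this pre-limit algebraic cancellation; without it the limit is either trivial or illegitimate.
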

\begin{proof} Rewrite the LHS of the formula in Corollary \ref{tq} as
\begin{eqnarray}\label{tnq}
&&\sum_{k=-\infty}^{\infty}\frac{(s;q)_k} {
(aq/s;q)_k}q^{3k^2-3k\over2}\left(-\frac{a^2}{s}\right)^k-a^2\sum_{k=-\infty}^{\infty}\frac{(s;q)_k}
{ (aq/s;q)_k}q^{3k^2+5k\over2}\left(-\frac{a^2}{s}\right)^k\\
&&=\sum_{k=-\infty}^{\infty}\frac{(s;q)_k} {
(aq/s;q)_k}q^{3k^2-3k\over2}\left(-\frac{a^2}{s}\right)^k-\sum_{k=-\infty}^{\infty}\frac{(s;q)_{k-1}}
{ (aq/s;q)_{k-1}}q^{3k^2-k-2\over2}\frac{(-1)^{k-1}a^{2k}}{s^{k-1}}\notag\\
&&=\sum_{k=-\infty}^{\infty}\frac{(s;q)_{k-1}} {
(aq/s;q)_{k-1}}q^{3k^2-3k\over2}\left(-\frac{a^2}{s}\right)^{k-1}\left[{1-sq^{k-1}\over1-aq^k/s}\left(-{a^2\over
s}\right)-q^{k-1}a^2\right]\notag\\
&&=\sum_{k=-\infty}^{\infty}\frac{(s;q)_{k-1}} {
(aq/s;q)_{k-1}}q^{3k^2-3k\over2}\left(-\frac{a^2}{s}\right)^{k-1}{-a^2+a^3q^{2k-1}\over
s-aq^k}.\notag
\end{eqnarray}
Multiply both sides of  the formula in Corollary \ref{tq} by $s\over
a$, and use \eqref{tnq} to get
\begin{equation}\label{tqs}
\sum_{k=-\infty}^{\infty}\frac{(s;q)_{k-1}} {
(aq/s;q)_{k-1}}q^{3k^2-3k\over2}\left(-\frac{a^2}{s}\right)^{k-1}{s(-a+a^2q^{2k-1})\over
s-aq^k}=\frac{(q,q/a,a;q)_\infty}{(q/s,aq/s;q)_\infty}.
\end{equation}
We look both sides of \eqref{tqs} as functions of $s$.   Note that
the point $s=\infty$ is their removable pole, and then, the series
on the LHS of \eqref{tqs} is convergent uniformly in a neighborhood
of $\infty$. Let $s\rightarrow\infty$. The sum and the limit can
interchange. Thus, we have
\begin{equation}\label{lim}
\sum_{k=-\infty}^{\infty}\lim_{s\rightarrow\infty}\left(\frac{(s;q)_{k-1}}
{
(aq/s;q)_{k-1}}q^{3k^2-3k\over2}\left(-\frac{a^2}{s}\right)^{k-1}{s(-a+a^2q^{2k-1})\over
s-aq^k}\right)=(q,q/a,a;q)_\infty.
\end{equation}
Now we compute the limitation behind the sum above. For any integer
$k$,
\begin{eqnarray}\label{lim2}
&&\lim_{s\rightarrow\infty}\left(\frac{(s;q)_{k-1}} {
(aq/s;q)_{k-1}}q^{3k^2-3k\over2}\left(-\frac{a^2}{s}\right)^{k-1}{s(-a+a^2q^{2k-1})\over
s-aq^k}\right)\\
&&=q^{3k^2-3k\over2}a^{2k-2}\lim_{s\rightarrow\infty}\left(\frac{(s;q)_{k-1}}
{
(aq/s;q)_{k-1}}\left(-\frac{1}{s}\right)^{k-1}\right)\lim_{s\rightarrow\infty}\left({s(-a+a^2q^{2k-1})\over
s-aq^k}\right)\notag\\
&&=q^{3k^2-3k\over2}a^{2k-2}\times
q^{k^2-3k+2\over2}\times\left(-a+a^2q^{2k-1}\right)\notag\\
&&=-q^{2k^2-3k+1}a^{2k-1}+q^{2k^2-k}a^{2k}.\notag
\end{eqnarray}
In \eqref{lim} and \eqref{lim2}, we have proved
\begin{eqnarray}\label{lm2}
-\sum_{k=-\infty}^{\infty}q^{2k^2-3k+1}a^{2k-1}+\sum_{k=-\infty}^{\infty}q^{2k^2-k}a^{2k}=(q,q/a,a;q)_\infty.
\end{eqnarray}
Combining the two sums on the LHS of \eqref{lm2}, we arrive at
Corollary \ref{trii}. This ends the proof.
\end{proof}

\begin {thebibliography}{99}
\bibitem{aar}
 G. E. Andrews, R. Askey, R. Roy, Special Functions, Encyclopedia of Mathematics and Its Applications, Volume 71, Cambridge University Press, 1999.

\bibitem{askism}
R. Askey, M. E. H. Ismail, The very well poised $_6\psi_6$,
 Proc.  Amer. Math. Soc.
 \textbf{77} (1979), 218--222.

\bibitem{ben}
 B. C. Berndt,  Ramanujan's Notebook, Part III, Springer--Verlag, New
Jork, 1991.

%
%\bibitem{chan}
%Chan, S.H., {\it A short proof Ramanujan's famous $_1\psi_1$
%summation formula.} J. Approx. Theory, 132 (2005), 149--153.
%
%\bibitem{chhu}
% Chen, S.-L.;  Huang, S.-S., {\it New modular relations for the
%G\"ollnitz-Gordon functions.} J. Number Theory, 93 (2002), no. 1,
%58--75.
%
%\bibitem{cooper0}
%Cooper, S., {\it The Macdonald identities for $G_2$ and some
%extensions.} New Zealand J. Math., 26 (1997), no. 2, 161--182.
%
%\bibitem{cooper}Cooper, S., {\it A review of Venkatachaliengar's book on elliptic
%functions.}\\
%http://www.math.uiuc.edu/People/berndt/venkatachaliengar-proceedings.html

\bibitem{gasper}
 G. Gasper,   M. Rahman, Basic Hypergeometric Series, 2nd ed., Cambridge
Univ. Press, Cambridge, MA, 2004.

\bibitem{ismail}
M. E. H. Ismail, A simple proof of Ramanujan's $_1\psi_1$ sum, Proc.
Amer. Math. Soc. \textbf{63} (1977), 185--186.

%\bibitem{jackfh}
%F.H. Jackson, On $q$-definite integrals, Quart. J. Pure and Appl.
%Math. 41 (1910), 193--293.

\bibitem{lang}
S. Lang, Complex Analysis, 4th ed., Springer, New York, 1999.

\bibitem{sk}
H. S. Shukla, A note on the sums of certain bilateral hypergeometric
series, Proc. Cambridge Phil. Soc. 55 (1959), 262--266.
%
%
%%\bibitem{john}
%%W. P. Johnson, {\it How Cauchy missed Ramanujan $_1\psi_1$
%%identity}, Amer. Math. Month. 111 (2004), 791--800.
%
%
%\bibitem{klzg} Kongsiriwong, S; Liu, Z.-G., {\it Uniform proofs of $q$-series-product
%identities.}
% Results Math. 44 (2003), no. 3-4, 312--339.
%
%
%\bibitem{ven}
%Venkatachaliengar, K., {\it  Development of elliptic functions
%according to Ramanujan.} Madurai Kamaraj University, Madurai, 1988.
%
%\bibitem{zhu}  Zhu, J.-M., {\it  Two series-product identities and  Dedekind's eta function}. Submitted.

\end{thebibliography}
\end{document}